\documentclass[12pt,reqno]{amsart}
\usepackage{bm}
\usepackage{graphicx}
\usepackage{adjustbox}
\usepackage{amsthm,amssymb,amsmath}
\usepackage[T1]{fontenc}
\usepackage[utf8]{inputenc}
\usepackage[colorlinks=true, linkcolor=blue, urlcolor=blue, citecolor=blue, anchorcolor=blue]{hyperref}
\usepackage{xcolor}
\usepackage{mathrsfs}
\usepackage{enumitem}
\usepackage{setspace}
\usepackage{yfonts}
\usepackage{float}
\usepackage{mathtools}
\usepackage[all,cmtip]{xy}
\usepackage{todonotes}
\usepackage{tikz-cd}
\usepackage{relsize}

\newtheorem{manualtheoreminner}{Theorem}
\newenvironment{manualtheorem}[1]{%
\renewcommand\themanualtheoreminner{#1}%
  \manualtheoreminner
}{\endmanualtheoreminner}

\usepackage[backend=bibtex]{biblatex}
\tikzcdset{row sep/normal=50pt, column sep/normal=50pt}

\newtheorem{lemma}{Lemma}[section]

\newtheorem{corollary}[lemma]{Corollary}

\theoremstyle{definition}

\newtheorem{remark}[lemma]{Remark}

\numberwithin{equation}{section}

\begin{document}
\title[]{Singularity of cubic hypersurfaces and hyperplane sections of projectivized tangent bundle of projective space}
\author[A. Bansal]{Ashima Bansal}
\address{Department of Mathematics, Shiv Nadar University, NH91, Tehsil Dadri, Greater Noida, Uttar Pradesh 201314, India}
\email{ashima.bansal@snu.edu.in}
\author[S. Sarkar]{Supravat Sarkar}
\address{\scshape Fine Hall, Princeton, NJ 700108}
\email{ss6663@princeton.edu}
\author[S. Vats]{Shivam Vats}
\address{Indian Institute of Science Education and Research Tirupati, Srinivasapuram, Yerpedu Mandal, Tirupati Dist, Andhra Pradesh, India – 517619}
\email{shivamvatsaaa@gmail.com}
\subjclass[2010]{14B05, 14J17, 32S25}

\keywords{Cubic hypersurface, rational singularity, hyperplane section}
\begin{abstract}
We show that the normal points of a cubic hypersurface in projective space have canonical singularities unless the hypersurface is an iterated cone over an elliptic curve. As an application, we give a simple linear algebraic description of all the hyperplane sections of projectivized tangent bundle of projective space, hence describing hyperplane sections of a rational homogeneous manifold of Picard rank $2$. This also simplifies and extends recent results of Mazouni-Nagaraj in higher dimensions. We also compute the Chow ring of these hyperplane sections.
\end{abstract}
\maketitle
\section{Introduction}
We work throughout, over the field $k=\mathbb{C}$ of complex numbers. Given a smooth projective variety $X$ in $\mathbb{P}^N$, it is natural to study the hyperplane sections of $X$, that is, the schemes $H\cap X$ for $H$ a hyperplane in $\mathbb{P}^N$. One is led to investigate the conditions under which the hyperplane section is singular, and if it is singular, how bad the singularity can be. On the one hand, this leads to the notion of dual varieties. On the other hand, since the linear system of hyperplane sections is a flat family, one gets valuable information about singular degenerations of smooth varieties.

If $X$ is the image of $d$-uple embedding of $\mathbb{P}^n$, the hyperplane sections are the degree $d$ hypersurfaces in $\mathbb{P}^n$. A complete classification of these hypersurfaces is known only for some very specific values of $d$ and $n$. For $d=1$, they are all isomorphic to $\mathbb{P}^{n-1}$. For $d=2$, it is easy to see that they are iterated cones over some smooth quadric hypersurface, a union of two distinct hyperplanes, or a nonreduced scheme supported on a hyperplane. For $d=3$ and $n=2$, they are cubic curves in $\mathbb{P}^2$, and it is easy to completely describe them up to isomorphism. For $d=3$ and $n>2$, that is, the study and classification of cubic hypersurfaces has been an interesting and fruitful topic for a long time. The study of smooth cubic hypersurfaces yielded the beautiful result that any smooth cubic surface in $\mathbb{P}^3$ contains exactly $27$ lines, also giving birth to the famous problem of deciding the rationality of cubic fourfolds. A detailed study of smooth cubic hypersurfaces is in \cite{Huy}.

The study of singular cubic hypersurfaces is more complicated and requires studying several cases. Singular cubic surfaces are classified in \cite{Sc}. \cite{BL}, \cite{Loo}, \cite{Cay}, \cite{SS} also studied singular cubic surfaces. \cite{BW1} studied the singularities of cubic surfaces in light of modern singularity theory. In \S 2 of this article, we aim to do the same for higher dimensional cubic hypersurfaces.

There has not been much progress in the study of singularities of higher dimensional cubic hypersurfaces.  \cite{LPS} studied non-normal cubic hypersurfaces. In \S 2 of this article, we prove the following result about singularity in the normal locus of a cubic hypersurface. This result gives some conditions under which a hypersurface has canonical/rational singularities. In that sense, it has a similar flavour as the results in \cite{BMW}, \cite{BW2}.

\begin{manualtheorem}{A}\label{A} 

Let $n$ be a positive integer, $X$ a cubic hypersurface in $\mathbb{P}^n$, and assume that $X$ is not an iterated cone over an elliptic curve. Let $U$ be a nonempty open subvariety of $X$ such that $U$ is normal. Then $U$ has canonical singularities.
\end{manualtheorem}

Here for a projective variety $Z$ in $\mathbb{P}^n$, an iterated cone over $Z$ is the projective variety in $\mathbb{P}^{n+r}$ for some $r\geq 0$ which is the cone over $Z$ with vertex/axis a linear subvariety of dimension $r-1$ (see {\cite[Exercise I.5.12(d)]{Hart}}). 

In \S 3, we study hyperplane sections and their singularities for the projective variety $\mathbb{P}(T_{\mathbb{P}^n})$. It is well-known that the tangent bundle of $\mathbb{P}^n$, $T_{\mathbb{P}^n}$ is ample. Moreover, the line bundle $\mathcal{O}_{\mathbb{P}(T_{\mathbb{P}^n})}(1)$ is very ample, so we get a natural embedding of $\mathbb{P}(T_{\mathbb{P}^n})$ in a projective space of dimension $(n+1)^2-2$. If we look at $\mathbb{P}(T_X)$ for other smooth projective varieties $X$, we do not have such a natural projective embedding, as \cite{Mo} proved $\mathbb{P}^n$ is the only smooth projective variety with ample tangent bundle. It is natural to study the hyperplane sections and their singularity of these special projective varieties $\mathbb{P}(T_{\mathbb{P}^n})$. \cite{MN} studied it for $n=2$. In \S 3 of this paper, we study it in higher dimensions. Using Theorem A, we can give a description of their singularities, which is analogous to the result in \cite{MN}. Our method also gives a simpler and elementary proof to the result in \cite{MN}. A key feature of our result is that the description of the hyperplane sections is given completely in terms of some linear algebraic data related to the hyperplane chosen. This phenomenon is also present and easy to see for the Plucker embedding of Grassmanians and some other rational homogeneous spaces of Picard rank $1$ under the embedding given by the ample generator of the Picard group, or product of some such varieties. For other projective varieties, it is indeed rare to have such a simple linear algebraic description of the hyperplane sections. To the authors' best knowledge, our result is the first result studying hyperplane sections of a full class of rational homogeneous spaces of Picard rank $\geq 2$ which are not products of rational homogeneous spaces of Picard rank $1.$

To state our result, we need the following notations. For a positive integer $m$, denote the space of $m\times m$ matrices by $M_{m\times m}$. For $0\leq s\leq r$, let $$ V_{s,r}:= \left\{ ( [\underline{z}],[\underline{w}]) \in \mathbb{P}^{r-1} \times \mathbb{P}^{r-1} \,\,|\,\, \sum_{s< i \leq r}  z_{i}w_{i}=0 \right\}.$$ \hspace{10pt}Here $[\underline{z}]=[z_1,\ldots,z_r],$ 
 $ [\underline{w}]=[w_1,\ldots,w_r]$ are points of $\mathbb{P}^{r-1}$. Note that $ V_{0,r} \cong \mathbb{P}(T_{\mathbb{P}^{r-1}})$, $ V_{r,r}= \mathbb{P}^{r-1} \times \mathbb{P}^{r-1}.$  $V_{s,r}$ for $s<r$ are exactly the hyperplane sections of $\mathbb{P}^{r-1} \times \mathbb{P}^{r-1}$ under the Segre embedding. We use the convention of projectivization of vector spaces/vector bundles as in {\cite[Chapter 2, section 7]{Hart}}. In particular, for a vector space $W$ with dual $W^*$, the elements of $\mathbb{P}(W^{\ast})$ are the lines in $W$ passing through the origin. We denote the element of $\mathbb{P}(W^{\ast})$ corresponding to the line in $W$ passing through the origin and a nonzero point $w\in W$ by $[w]$.  Also, for a finite type scheme $Y$ over $\mathbb{C}$, let $Y_{\textrm{sm}}, Y_{\textrm{sing}}$ be the smooth locus and singular locus of $Y$, respectively. 
 
 Note that we have the following identification:
 \begin{equation*}
     \mathbb{P}_{\mathbb{P}^n}(T_{\mathbb{P}^n})= \Big\{ ( [\underline{x}], [\underline{y}] \in \mathbb{P}^n \times \mathbb{P}^n\medspace |\medspace \underline{x}^{t} \underline{y}=0 \Big\}.
 \end{equation*} 
 For $\Bar{A}\in M_{(n+1)\times (n+1)} / \textrm{scalar matrices}$, define
 \begin{equation}\label{H}
     H_{[\Bar{A}]}= \Big\{ ([\underline{x}], [\underline{y}]) \in X \,\, |\,\, \underline{x}^{t}  A   \underline{y}=0\Big \}= \Big\{ ([\underline{x}], [\underline{y}]) \in  \hspace{3pt}\mathbb{P}^n \times \mathbb{P}^n | \hspace{3pt} \underline{x}^{t} \underline{y}=\underline{x}^{t} A\underline{y}=0\Big\}.
 \end{equation} 
 This is a prime divisor in $\mathbb{P}(T_{\mathbb{P}^n})$.
 
 Now we can state our result.

\begin{manualtheorem}{B}\label{B} Let $ n \geq 2 $, $W = M_{(n+1)\times (n+1)} / \textrm{scalar matrices} $.
There is a natural isomorphism 
    $$\mathbb{P}(W^{\ast})  \cong    | \mathcal{O}_{\mathbb{P}(T_{\mathbb{P}^n})}(1) |$$
    $$ [\Bar{A}] \longmapsto  H_{[\Bar{A}]} $$ 
such that $H_{[\Bar{A}]}$ is always reduced of degree  $\binom {2n} n$, and we have the following.
\begin{enumerate}
\item  $H_{[\Bar{A}]}$ is not irreducible if and only if $[\Bar{A}] = [\Bar{B}]$ for some rank 1 matrix $B$. In this case, $H_{[\Bar{A}]} $ has two irreducible components, each having degree $ \frac{1}{2} \binom {2n} n$  and isomorphic to $ \mathbb{P}_{\mathbb{P}^{n-1}}(\mathcal{O}_{\mathbb{P}^{n-1}} \oplus T_{\mathbb{P}^{n-1}}(-1))$. Their intersection is isomorphic to 
    \begin{equation}
    \begin{cases}
V_{0,n}, & \quad \textrm{if A is diagonalizable}, \\
V_{1,n}, & \quad\textrm{if A is not diagonalizable }.
\end{cases}
\end{equation}
All non-irreducible $ H_{[\bar{A}]}$'s with diagonalizable $A$'s are isomorphic, all   non-irreducible $ H_{[\bar{A}]}$'s with non-diagonalizable $A$'s are isomorphic.

\item Now suppose $ H_{[\Bar{A}]} $ is irreducible. Then $ H_{[\Bar{A}]} $ is a normal rational Fano variety with singular locus $  (H_{[\Bar{A}]})_{\emph{sing}} \cong \sqcup_{i} V_{s_{i},r_{i}} $, where $s_i$, $r_i$ are defined as follows, for each eigenvalue $ \lambda_{i}$ of $A$, 
\begin{equation*}
   r_{i}= \textrm{ number of Jordan blocks in a Jordan form of A corresponding to eigenvalue } \lambda_{i}
\end{equation*}
and
\begin{equation*}
 s_{i}= \textrm{ number of Jordan blocks in a Jordan form of A of size >1, corresponding to eigenvalue } \lambda_{i}.
 \end{equation*}
Each singularity of $ H_{[\Bar{A}]}$ is a canonical singularity, and is locally isomorphic to the singularity of a normal rational hypersurface in $ \mathbb{A}^{2n-1}$ of degree  $ \leq 3$.
  \item For $ n \geq 3 $ and a general $ [\Bar{A}] \in \mathbb{P}(W^{\ast})$, $H_{[\Bar{A}]} $ is a smooth rational Fano variety of dimension $ 2n-2$, Picard rank 2, whose both elementary contraction have target $ \mathbb{P}^n$ and general fiber $ \mathbb{P}^{n-2}$. 
  \end{enumerate}

\end{manualtheorem}

 The above description allows us to find the dual variety of $\mathbb{P}(T_{\mathbb{P}^n})$ in Corollary \ref{dual}.

Also, extending the results in \cite{MN}, Theorem \ref{B} describes all possible deformations in $\textrm{SL}_{n+1}(\mathbb{C})/B$ of a codimension $1$ subscheme $Y$ which is union of two Schubert divisors, here $\textrm{SL}_{n+1}(\mathbb{C})$ is the group of all $(n+1)\times (n+1)$ matrices over $\mathbb{C}$ of determinant $1$ and $B$ is a parabolic subgroup (see Remark \ref{deform}).

For $ \Bar{A} \in W \backslash 
{0} $ such that $H = H_{[\bar{A}]}$ is smooth,  the Chow ring of $H$ is defined. We compute the Chow ring in Theorem \ref{C}. Let us first introduce some notations. For a smooth projective variety $Y$ of dimension $n$, let $A_{k}(Y)$ be the Chow group of the $k$-cycles in $Y$, and let $A^{k}(Y)=A_{n-k}(Y).$ Under the intersection product, $A^{\ast}(Y)= \oplus_{k=0}^{n} A^{k}(Y)$ becomes a graded ring. Since $H= H_{[\bar{A}]}$ is smooth, by Corollary \ref{dual} $A$ has distinct eigenvalues. So, there exist distinct points $ p_{0},p_{1},\ldots,p_{n}\in \mathbb{P}^n$ corresponding to the eigenvectors of $A$. Let $ H \xhookrightarrow{i} \mathbb{P}(T_{\mathbb{P}^n})$ be the inclusion, $ \mathbb{P}(T_{\mathbb{P}^n}) \xlongrightarrow{q}  \mathbb{P}^n$ the projection, $ \pi=q \circ i : H \longrightarrow \mathbb{P}^n$. Let $ \alpha= \pi^{\ast} c_{1}(\mathcal{O}_{\mathbb{P}^n}(1)) $,  $ \zeta= i^{\ast} c_{1}(\mathcal{O}_{\mathbb{P}(T_{\mathbb{P}^n})}(1)) \in A^{1}(H).$  Note that for $ x\in \mathbb{P}^n$, we have $ \pi^{-1}(x) \cong \mathbb{P}^{n-2}$ if $ x \notin \{ p_{0},p_{1},\ldots,p_{n} \}$ and $ \pi^{-1}(x) \cong \mathbb{P}^{n-1}$ if $ x \in \{ p_{0},p_{2},\ldots,p_{n}\}.$ Let $ E_{i} \cong \pi^{-1}p_{i}.$ So $  [ E_{i}] \in A_{n-1}(H)= A^{n-1}(H).$ 

Now we can state the result.

\begin{manualtheorem} {C} \label{C}
   (i) For $ k \neq n-1$, $ A^{k}(H)$ is free abelian group with basis 
   $\Big \{ \zeta^{i}\alpha^{j}\medspace |\medspace  i+j = k, 0\leq i\leq n-2,\medspace  0\leq j\leq n \Big\} .$
   
   (ii) $A^{n-1}(H)$ is free abelian group with basis $$\Big \{ \zeta^{i}\alpha^{j} \,\,|\,\,  i+j= n-1, \medspace 0\leq i \leq n-2, \medspace 0\leq j \leq n\Big \} \bigsqcup\Big \{ [E_{i}]\,\, |\,\, 0 \leq i \leq n\Big \}  .$$

 $(iii)$Multiplication in $ A^{\ast}(H)$ is defined by the following relations: 

$(a)$ $ \alpha ^{n+1}= 0$, 

$ (b) \medspace \sum_{j=0}^{n-1} (-1)^{j}\binom{n+1}{j} \zeta^{n-1-j} \cdot \alpha^{j}+ (-1)^{n} \sum_{i=0}^{n}[E_{i}]=0 $

$(c)$ $ [E_{i}]\cdot [E_{j}]= (-1)^{n-1} \delta_{ij} \cdot \zeta^{n-2} \alpha^{n}$

$(d)$ $[E_{i}] \cdot\alpha =0  $ for $ 0\leq i \leq n$

$(e)$ $ E_{i} \cdot\zeta = \alpha^{n} $ for $ 0\leq i \leq n.$
\end{manualtheorem}

Here in $(iii)(c)$,
\[
\delta_{i,j} = 
\begin{cases}
1 & \text{if } i = j \\
0 & \text{if } i \ne j
\end{cases}.
\]

\section{Singularity of cubic hypersurface}
\begin{proof}[Proof of Theorem \ref{A}] If $X$ is not integral, then either $U$ is smooth, or $U$ is an open subset of a normal quadric hypersurface, so $U$ has canonical singularities (since normal quadric hypersurfaces are iterated cones over smooth quadric hypersurfaces, they have canonical singularities by {\cite[Lemma 3.1 (2)] {Ko}}). So we assume $X$  is integral. We proceed by induction on $n$. For $n\leq 2$, there is nothing to show. Suppose $n=3$. If $X$ is normal, Theorem A follows from \cite{BW1}. If $X$ is not normal, by {\cite[Case E]{BW1}}, $X_{\textrm{sing}}$ is a line $L$. Since $U$ is normal, it is regular in codimension $1$. So $U\cap L=\varnothing$. In particular, $U$ is smooth. So Theorem A follows for $n=3$. So we assume $ n\geq 4$.  

Suppose $U$ does not have canonical singularities. Let $x_{0} \in U$ be such that $U$ is not canonical at $x_{0}$. Without loss of generality, we can assume that $x_0=[1:0:\ldots:0].$ Since $X$ is a hypersurface in $\mathbb{P}^n$, $K_{X}$ is Cartier. By {\cite[Corollary 5.24] {KM}}, $U$ does not have rational singularity at $x_{0}$.

\underline{Claim}: For all $y\in X$ with $y\neq x_{0},$ the line joining $y$ to $x_{0}$ is in $X$.

\begin{proof}
    
Let $L$ be the line joining $y$ and $x_0$. Suppose $L\not\subset X.$ Then $ L\cap X$ is a finite set containing $x_{0}$. By shrinking $U$ around $x_{0}$ we may assume $U\cap L= {x_{0}}$ set-theoretically. Let $H$ be a general hyperplane in $\mathbb{P}^n$ containing $L$. So $ H\cap X$ is a cubic hypersurface in $ H \cong \mathbb{P}^{n-1}.$ Since $L \cap U_{\textrm{sm}} = \varnothing,$ the linear system of hyperplanes containg $L$ has no basepoint in $ U_{\textrm{sm}}.$ By Bertini, $ H \cap U_{\textrm{sm}}$ is smooth. Also, $ L\cap U_{\textrm{sing}}= {x_{0}}$ and $H$ is general hyperplane containing $L.$ So, if $F$ is an irreducible component of $ U_{\textrm{sing}}$ such that $ F \neq \{x_{0}\}$, we have $ F\not \subset H.$  So, either $ \textnormal{dim}\medspace (H \cap U_{\textrm{sing}}) =0$ or $ \textnormal{dim}\medspace (H \cap U_{\textrm{sing}}) < \textnormal{dim} \hspace{3pt} U_{\textrm{sing}}.$ Since $ U$ is normal, we have $ \textnormal{dim}\hspace{3pt}U_{\textrm{sing}} \leq \textnormal{dim}\medspace U - 2 = n-3 $. As $n\geq 4$, we get 
$$ \textnormal{dim} \medspace(H \cap U)_{\textrm{sing}}\leq \textnormal{dim}\medspace(H \cap U_{\textrm{sing}}) \leq n-4= \textnormal{dim}\medspace(H\cap U)-2. $$
So, $ H\cap U$ is regular in codimension  1. Since $ H \cap X$ is hypersurface in $H$, so $ H \cap X$ is Cohen-Macaulay, so $S_{2}$. By Serre's criterion of normality, $ H\cap U$  is normal. Since $U$ does not have rational singularity at $x_{0}$, by {\cite[Theorem 5.42]{KM}}, $H\cap U$ does not have rational singularity at $x_0$. By {\cite[Theorem 5.24]{KM}}, $H\cap U$ does not have canonical singularity at $x_0$. By induction hypothesis, $H\cap X $ must be an iterated cone over an elliptic curve. Since $ H \cap X$ is singular at $x_{0}$, $x_{0}$  must be in the linear subspace which is the axis of the cone. So, for all $ \medspace z\in X\cap H$ such that $ z\neq x_{0}$, the line joining $x_{0}$ and $z$ lies in $X \cap H$. Since $y\in H$ by construction, we deduce that $L\in X\cap H$, contradicting our assumption $L\not\subseteq X.$
\end{proof}

By the claim, there is a projective variety $ Y \hookrightarrow \mathbb{P}^{n-1}$ such that $X$ is the cone over $Y$ with vertex $x_{0}.$ Here $ \mathbb{P}^{n-1}$ is the hyperplane in $ \mathbb{P}^n$ consisting of points $[z_{0}:z_{1}:\ldots :z_{n}] $ with $z_{0}=0$.
Since $X$ is normal at $x_{0}$, $Y$ must be projectively normal, a fortiori, normal. Since $X$ is an integral cubic hypersurface, so  $Y$ is also an integral cubic hypersurface. If $ H = \mathcal{O}_{Y}(1),$ adjunction gives $ K_{Y}= -(n-3) H$. Since $ n\geq 4$, we have $ -(n-3) \leq -1.$ Since $X$ is not canonical at $x_{0}$, {\cite[Lemma 3.1 (2)] {Ko}} shows $Y$ is not canonical. By induction hypothesis, $Y$ is an iterated cone over an elliptic curve. Since $X$ is a cone over $Y$, $X$ is also an iterated cone over an elliptic curve. So the proof is complete by induction.    
\end{proof}
\begin{remark}
    The authors came to know later that in the special case when $X$ is normal, Theorem \ref{A} can also be deduced from \cite{beltrametti2015fano}.
\end{remark}
\section{Hyperplane sections of $\mathbb{P}(T_{\mathbb{P}^n})$}
In this section, we shall prove Theorems \ref{B} and \ref{C}. First, we prove a lemma.
\begin{lemma} \label{sing}
     Let $f(X_{0},\ldots X_{n},Y_{0},\ldots Y_{n})$, $g(X_{0},\ldots X_{n},Y_{0},\ldots Y_{n})$ be polynomials, homogenous in $\underline{X}$ and $ \underline{Y}$ separately. Let $$ Z = \Big \{ ( [\underline{x}],[ \underline{y}] \in \mathbb{P}^n \times \mathbb{P}^n \medspace | \medspace f(\underline{x}, \underline{y})= g(\underline{x},\underline{y})  =0 \Big\}.$$
    Assume $Z$ is a variety of dimension $2n-2$. Then $Z_{\textrm{sing}}$ consists of points $([\underline{x}], [\underline{y}])$ such that rk
${\begin{pmatrix}
    \bigtriangledown_{\underline{x}}f & \bigtriangledown_{\underline{y}} f\\
    \bigtriangledown_{\underline{x}}g & \bigtriangledown_{\underline{y}}g
\end{pmatrix}} \leq 1$. 
Here $\bigtriangledown_{\underline{x}}f = \left( \frac{\partial f}{\partial x_{0}}, \cdots ,\frac{\partial f}{\partial x_{n}}\right) $ and $\bigtriangledown_{\underline{y}}f$, $\bigtriangledown_{\underline{x}}g$, $\bigtriangledown_{\underline{y}}g$ are defined similarly. 
\end{lemma}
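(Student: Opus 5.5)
The plan is to reduce to the Jacobian criterion on an affine chart. Since $f,g$ are bihomogeneous, $Z$ is covered by the open sets $U_{ij}=\{x_i\neq 0,\ y_j\neq 0\}\cong \mathbb{A}^n\times\mathbb{A}^n$. By symmetry we work on $U_{00}$ with affine coordinates $x_0=y_0=1$. There $Z\cap U_{00}$ is cut out by the dehomogenized polynomials $\tilde f(x_1,\dots,x_n,y_1,\dots,y_n)=f(1,x_1,\dots,x_n,1,y_1,\dots,y_n)$ and $\tilde g$.

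The scheme $Z$ is the subscheme defined by the two equations. By assumption it is a variety of dimension $2n-2$, which is codimension $2$ in the $2n$-dimensional ambient space. Hence $Z\cap U_{00}$ is a complete intersection of codimension $2$ in $\mathbb{A}^{2n}$. The Jacobian criterion then says a point $p$ is singular if and only if the $2\times 2n$ affine Jacobian matrix
\[
J_{\mathrm{aff}}=\begin{pmatrix}\partial_{x_1}\tilde f&\cdots&\partial_{x_n}\tilde f&\partial_{y_1}\tilde f&\cdots&\partial_{y_n}\tilde f\\ \partial_{x_1}\tilde g&\cdots&\partial_{x_n}\tilde g&\partial_{y_1}\tilde g&\cdots&\partial_{y_n}\tilde g\end{pmatrix}
\]
has rank $<2$ at $p$. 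This uses that the ideal is generated by exactly $\operatorname{codim}$ many equations. With $Z$ integral of the right dimension, the ideal $(\tilde f,\tilde g)$ defines $Z$ as a scheme, which is how $Z$ is defined. So the tangent space at $p$ has dimension $2n-\operatorname{rk}J_{\mathrm{aff}}(p)$, and smoothness means this equals $2n-2$.

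It remains to compare $J_{\mathrm{aff}}$ with the full $2\times(2n+2)$ bihomogeneous Jacobian $J$ in the statement. The affine partials are the homogeneous partials $\partial_{x_k}f,\partial_{y_k}f$ ($k\ge1$) evaluated at $(1,x,1,y)$. The only extra columns of $J$ are $\partial_{x_0}$ and $\partial_{y_0}$. Let $d_1,e_1$ be the bidegrees of $f$ and $d_2,e_2$ those of $g$. Euler's relations in each set of variables give
\[
\sum_k x_k\partial_{x_k}f=d_1f,\qquad \sum_k y_k\partial_{y_k}f=e_1f,
\]
and similarly for $g$. At a point of $Z$ these vanish. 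So with $x_0=1$, the $x_0$-column equals $-\sum_{k\ge1}x_k(\text{$x_k$-column})$, and likewise for the $y_0$-column. Both extra columns therefore lie in the span of the affine columns, and $\operatorname{rk}J=\operatorname{rk}J_{\mathrm{aff}}$ at points of $Z$. Rescaling representatives multiplies the rows by nonzero scalars and the columns accordingly, so the rank condition is well defined on $\mathbb{P}^n\times\mathbb{P}^n$. Doing this on every chart $U_{ij}$ gives the claim.

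The main obstacle is a bookkeeping point: justifying that the singular locus is exactly where the rank is $\le1$. This needs the codimension-$2$ complete intersection structure, so that the generic rank is $2$ and the tangent space dimension is $2n-\operatorname{rk}$. The dimension hypothesis in the lemma is precisely what supplies this.
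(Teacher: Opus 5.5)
Your proof is correct, and it is the alternative argument the paper sketches in the last sentence of its proof. You cover $Z$ by the charts $U_{ij}\cong\mathbb{A}^{2n}$, apply the Jacobian criterion to the codimension-$2$ complete intersection there, and use Euler's relations at points of $Z$ to show the $x_0$- and $y_0$-columns lie in the span of the affine columns, with the same coefficients in both rows. The paper's main argument skips this bookkeeping: it applies the Jacobian criterion directly to the affine cone $\widetilde{Z}\subset(\mathbb{A}^{n+1}\setminus 0)^{2}$, a principal $G_m^2$-bundle over $Z$, so singular loci correspond under the smooth projection.
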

\begin{proof}
    Define  $ \widetilde{Z}= \Big\{ (\underline{x}, \underline{y})\in (\mathbb{A}^{n+1} \backslash 0)^{2} \medspace | \medspace  f(\underline{x},\underline{y})=g(\underline{x},\underline{y})=0\Big \}$. Then the natural map $ \widetilde{Z} \xlongrightarrow{\pi} Z $ is a principal  $G_{m}^2 $- bundle. So, $ \pi^{-1}  Z_{\textrm{sing}}= \medspace \widetilde{Z}_{\textrm{sing}}.$ By the usual Jacobian criterion of smoothness, for $(\underline{x}, \underline{y}) \in \widetilde{Z}$, we have $(\underline{x}, \underline{y}) \in \widetilde{Z}_{\textrm{sing}} $ if and only if  rk
${\begin{pmatrix}
    \bigtriangledown_{\underline{x}}f & \bigtriangledown_{\underline{y}} f\\
    \bigtriangledown_{\underline{x}}g & \bigtriangledown_{\underline{y}}g
\end{pmatrix}} \leq 1$.  This proves the lemma. One can also prove it by looking at the affine cover of $ \mathbb{P}^n \times  \mathbb{P}^n$ by $ \mathbb{A}^{2n}$ and on each such affine showing that the Jacobian criterion of smoothness of $Z$, is equivalent to the one in the lemma, by applying Euler's theorem on homogeneous polynomials.    
\end{proof}

Now we can prove Theorem \ref{B}.
\begin{proof}[Proof of Theorem \ref{B}]
Let $ X=\mathbb{P}_{\mathbb{P}^n}(T_{\mathbb{P}^n})$. . The line bundle $ \mathcal{O}_{X}(1)$ on $X$ is the restriction of $ \mathcal{O}(1)\boxtimes  \mathcal{O}(1)$ of $ \mathbb{P}^n \times \mathbb{P}^n$. The divisor $X$ on $ \mathbb{P}^n \times \mathbb{P}^n$ corresponds the line bundle $ \mathcal{O}(1)\boxtimes  \mathcal{O}(1)$. So we have an exact sequence 
$$ 0 \longrightarrow \mathcal{O}_{\mathbb{P}^n \times \mathbb{P}^n} \longrightarrow \mathcal{O}(1)\boxtimes  \mathcal{O}(1) \longrightarrow \mathcal{O}_{X}(1) \longrightarrow 0. $$ This gives the following exact sequence on $H^0:$
\begin{equation}\label{scalar}
0 \longrightarrow k \longrightarrow H^{0}( \mathbb{P}^n \times \mathbb{P}^n, \mathcal{O}(1)\boxtimes  \mathcal{O}(1)) \longrightarrow H^{0}(X, \mathcal{O}_{X}(1)) \longrightarrow 0.   
\end{equation}

We identify $$ H^{0}( \mathbb{P}^n \times \mathbb{P}^n, \mathcal{O}(1)\boxtimes  \mathcal{O}(1) )  \cong M_{(n+1)\times (n+1)}$$
\begin{equation*}
  \underline{x}^{t} A \underline{y} \longleftarrow A
  \end{equation*}

 The image of the inclusion $k\rightarrow M_{(n+1)\times (n+1)}$ coming from \eqref{scalar} is the subspace of scalar matrices. So, \eqref{scalar} yields an identification $ H^{0}(X, \mathcal{O}_{X}(1)) = W$, which in turns yields the identification $ | \mathcal{O}_{X}(1) | = \mathbb{P}(W^{\ast}) $ as in the theorem.

 Also, we have
 
 deg $ H_{[\Bar{A}]}= \textrm{deg}\medspace X = \textrm{deg}  \medspace \mathbb{P}^n \times \mathbb{P}^n = \textrm{degree of Segre embedding }= \binom{2n} n$.

Note that by \eqref{H}, if $P\in \textrm{GL}_{n+1}(k)$ and $ J= PAP^{-1}  $  is a Jordan canonical form of $A$,  then the automorphism 
$$ \mathbb{P}^n \times \mathbb{P}^n \longrightarrow \mathbb{P}^n \times \mathbb{P}^n$$
$$ \hspace{35pt} \Big([\underline{x}], [\underline{y}]\Big) \longmapsto \Big([(P^{-1})^t \underline{x}], [P \underline{y}]\Big) $$
gives an isomorphism $ H_{[\Bar{A}]} \xlongrightarrow{\cong}H_{[\Bar{J}]}$.

\textbf{Proof of part 1: }For $ a,b \in \mathbb{Z}$, let  $$ L_{a,b}= \mathcal{O}(a) \boxtimes \mathcal{O}(b) \mid_{X}.$$
One can easily show that $ h^{0}(X, L_{a,b}) =0 $ if $ a <0$ or $ b<0$ (for example, by restricting a section on the fibres of one of the two projections $ X \longrightarrow \mathbb{P}^n).$ So, if the divisor $H_{[\Bar{A}]}$ is the sum of two nonzero effective divisors, they must belong to $|L_{1,0}|$ and $|L_{0,1}|$. Hence,

\hspace{20pt} $ H_{[\Bar{A}]}\hspace{4pt} \textrm{is not integral } \Longleftrightarrow H_{[\Bar{A}]}= D_{1} \cup D_{2} \hspace{4pt} \textrm{for}\hspace{4pt} D_{1}\in |L_{1,0}|, D_{2} \in |L_{0,1}| $ 

\hspace{124pt} $ \Longleftrightarrow H_{[\Bar{A}]}=\Big \{ ([\underline{x}], [\underline{y}]) \in X \hspace{3pt} | \hspace{3pt} \underline{a}^{t}\underline{x}=0 \hspace{3pt} \textrm{or} \hspace{3pt}  \underline{b}^{t} \underline{y}=0 \Big \} \hspace{3pt}\textrm{for some } $

\hspace{195pt} $  \textrm{nonzero}\hspace{3pt} \underline{a},\underline{b} \in k^{n+1}$

\hspace{122pt} $ \Longleftrightarrow H_{[\Bar{A}]}= \Big\{ ([\underline{x}], [\underline{y}]) \in X \hspace{3pt} | \hspace{3pt} \underline{x}^{t}(\underline{a}\underline{b}^{t})     \underline{y}=0 \Big\}\hspace{3pt}\textrm{for some } $

\hspace{185pt} $  \textrm{nonzero}\hspace{3pt} \underline{a},\underline{b} \in k^{n+1} $ 

\hspace{123pt}  $\Longleftrightarrow H_{[\Bar{A}]}= H_{[\underline{a}\underline{b}^{t}]} \hspace{3pt}\textrm{for some  nonzero } \hspace{3pt} \underline{a}, \underline{b} \in k^{n+1}\,\, (\text{by }\eqref{H})$

\begin{equation}\label{rank1}
\hspace{30pt} \Longleftrightarrow [\Bar{A}]= [\Bar{B}] \hspace {3pt} \textrm{ for some rank 1 matrix B}. 
\end{equation}

Suppose \eqref{rank1} holds, we consider 2 cases: 

$ \underline{\textrm{Case 1}}:$ $A$ is diagonalizable.

A Jordan form of $A$ is a scalar multiple of 
\begin{equation}\label{jordan1}
    J_{1} = 
    \begin{bmatrix}
1 & 0 &  \cdots & 0\\
0 & 0 &  \cdots & 0\\
\vdots &\vdots &   \ddots & \\
0 & 0 &  \cdots & 0
\end{bmatrix}_{(n+1)\times (n+1)}, 
\end{equation}
so that $ \underline{x} J_{1} \underline{y}= x_{0} y_{0}.$ Thus we have, 
$$
H_{[\Bar{A}]}\cong H_{[\Bar{J_{1}}]}= D_{1} \cup D_{2},$$ 
where $D_1=\left\{( \underline{[x]} , \underline{[y]}) \in X| x_0=0\right\}, D_2=\left\{( \underline{[x]} , \underline{[y]}) \in X\,\,|\,\, y_0=0\right\}.$ So,
\begin{equation*}
D_{1} \cong D_{2} \cong \mathbb{P}_{\mathbb{P}^{n-1}}\left(T_{\mathbb{P}^n}(-1)\mid_{\mathbb{P}^{n-1}}\right) \cong \mathbb{P}_{\mathbb{P}^{n-1}}( \mathcal{O}_{\mathbb{P}^{n-1}}\oplus T_{\mathbb{P}^{n-1}}(-1)).
\end{equation*}
Here $ \mathbb{P}^{n-1}$ is considered as a hyperplane in $ \mathbb{P}^{n}.$ The intersection 
$$ D_{1} \cap D_{2} = \left\{ \left( \underline{[x]} , \underline{[y]}\right) \in \mathbb{P}^n \times \mathbb{P}^n  \hspace{6pt} | \hspace{3pt} \underline{x}^{t} \underline{y}=0, x_{0}= y_{0}=0\right \}$$

\hspace{110pt} 
$ \cong \Big\{ ( [\underline{z}], [\underline{w}] ) \in \mathbb{P}^{n-1} \times \mathbb{P}^{n-1} \hspace{3pt} | \hspace{3pt} \underline{z}^{t} \underline{w} = 0 \Big\} $

\vspace{3pt}
\hspace{110pt} $ \cong \mathbb{P}_{\mathbb{P}^{n-1}}(T_{\mathbb{P}^{n-1}})$

\vspace{3pt}
\hspace{110pt} $ = \hspace{3pt}V_{0,n}$. 

Also, since $ H_{[\Bar{A}]} \cong H_{[\Bar{J_{1}}]}$ , it follows that all such $ H_{[\Bar{A}]}$'s are isomorphic.

$ \underline{\textrm{Case 2}}:$ $A$ is not diagonalizable. 

A Jordan form of $A$ is a scalar multiple of 
\begin{equation}\label{jordan2}
    J_{2} = 
\begin{bmatrix}
0 & 1 &  \cdots & 0\\
0 & 0 &  \cdots & 0\\
\vdots &\vdots &   \ddots & \\
0 & 0 &  \cdots & 0
\end{bmatrix}_{(n+1)\times (n+1)}, 
\end{equation}

so that $ \underline{x}^{t} J_{2} \underline{y}= x_{0}y_{1}.$ Hence we have

$$ H_{[\Bar{A}]}= D_{1} \cup D_{2}$$ 
where $ D_{1} \cong D_{2} \cong \mathbb{P}_{\mathbb{P}^{n-1}}( \mathcal{O}_{\mathbb{P}^{n-1}} \oplus T_{\mathbb{P}^{n-1}}(-1))$ as in case $1$. We also have,
$$ D_{1} \cap D_{2} = \left\{( \underline{[x]} , \underline{[y]}) \in \mathbb{P}^n \times \mathbb{P}^n  \hspace{3pt} | \hspace{3pt} \underline{x}^{t} \underline{y}=0, x_{0}= y_{1}=0 \right\} $$
\vspace{3pt}
\hspace{125pt} $ \cong \Big\{ ( [\underline{z}],[\underline{w}]) \in \mathbb{P}^{n-1} \times \mathbb{P}^{n-1} \hspace{3pt}| \hspace{3pt} \sum_{1<i \leq n} z_{i}w_{i} =0 \Big \} $

\vspace{3pt}
\hspace{117pt} $ = \hspace{3pt}V_{1,n}.$

Also, since $ H_{[\Bar{A}]} \cong H_{[\Bar{J_{2}}]}$ , it follows that all such $ H_{[\Bar{A}]}$'s are isomorphic. 

In both cases, $D_{1}$ and $D_{2}$ are conjugate modulo some automorphism of $ \mathbb{P}^n \times \mathbb{P}^n$, so $D_{1}$, $ D_{2}$ have same degree, which is 
$$ \frac{1}{2} \textrm{deg} H_{[\Bar{A}]}= \frac{1}{2} \binom {2n} n .$$     
It is also clear that $ H_{[\Bar{A}]}$ is always reduced. This proves the first part of the theorem.

\textbf{Proof of part 2:} Let $\Bar{A}\in W\setminus{0}$ be such that $H_{[\Bar{A}]} $ is irreducible. The map 
$$ \mathcal{O}_{\mathbb{P}^n}^{n+1} \xlongrightarrow{s} \mathcal{O}_{\mathbb{P}^n}(1)^{2}$$
\hspace{190pt} $ \underline{y} \longmapsto \begin{pmatrix}
   \underline{X}^{t} \underline{y} \\
    \underline{X}^{t}A \underline{y}
\end{pmatrix}$

is generically surjective, as $ A$ is not scalar matrix. So, $ E = \textrm{Ker} (s)$ is generically a vector bundle of rank $ n-1$. Letting $ H= H_{[\Bar{A}]}$, consider the map 
$H \longrightarrow \mathbb{P}^n $ given  by 
$  \left(\underline{[x]} , \underline{[y]}\right) \longmapsto [\underline{x}].$  
By \eqref{H}, over an open set $U$ in $ \mathbb{P}^n$, both $H$ and $ \mathbb{P}(E^{\ast})$ are projective subbundles of the trivial projective bundle $ U \times \mathbb{P}^n$ defined by the equations $$ \underline{x}^{t} \underline{y}=\underline{x}^{t}A \underline{y}=0  .$$ 

So, $H$ and $ \mathbb{P}(E^{\ast})$ are birational, hence $ H$ is rational.  

 Now let us prove the statement about $H_{\textrm{sing}}$. We can assume without loss of generality that $A$ is in Jordan form. By Lemma \ref{sing}, for $ ([\underline{x}], [ \underline{y}]) \in H$, we have

\hspace{17pt} $ ([\underline{x}], [ \underline{y}]) \medspace \in  H_{\textrm{sing}} \Longleftrightarrow \textrm{rk} {\begin{pmatrix}
    \underline{y}^{t} & \underline{x}^{t} \\
    \underline{y}^{t} A^{t} & \underline{x}^{t}A
\end{pmatrix}}\leq 1   $

\hspace{102pt} $ \Longleftrightarrow  \text{there is }\hspace{2pt} \lambda \in k \hspace{3pt}\textrm{such that} \hspace{3pt} \underline{y}^{t} A^{t}= \lambda \underline{y}^{t},\,\, \underline{x}^{t} A = \lambda \underline{x}^{t} $

\hspace{102pt} $ \Longleftrightarrow \text{there is } \hspace{2pt} \lambda \in k \hspace{3pt}\textrm{such that} \hspace{3pt} (A-\lambda I)\underline{y} =0,\,\, \underline{x}^{t}(A-\lambda I)=0 $

 \hspace{102 pt} $ \Longleftrightarrow \Big([\underline{x}],[ \underline{y}]\Big) \in  \textrm{ Sing}_{\lambda} A \hspace{3pt} \textrm{for some eigenvalue } \lambda \hspace{3pt} \textrm{of} \hspace{3pt} A. $

 Here for a nonscalar matrix $B \in M_{(n+1)\times (n+1)}(k)$, and $ \lambda \in k$, $$ \textrm{Sing}_{\lambda} (B) := \Big\{ ( [\underline{x}], [ \underline{y}]) \in \mathbb{P}^n \times \mathbb{P}^n \hspace{2pt} | \hspace{2pt} \underline{x}^{t} \underline{y} = \underline{x}^{t}B \underline{y}=0, \underline{x}^{t}(B-\lambda I)=0, (B-\lambda I) \underline{y}=0\Big\}. $$ 
 So, if $\lambda_{i}$'s are the distinct eigenvalues of $A$, we have 
 $$H_{\textrm{sing}} = \sqcup_{i} \hspace{2pt}\textrm{Sing}_{\lambda_{i}}(A) .$$ 
 The following claim completes the proof of $$H_{\textrm{sing}} = \sqcup_{i} V_{s_{i},r_{i}}.$$

 $ \textbf{ \underline{Claim}}$: $ \textrm{\textrm{Sing}}_{\lambda_{i}} \hspace{2pt} A\cong V_{s_{i}, r_{i}} $

  \begin{proof} Noting that $ \textrm{Sing}_{\lambda_{i}} \hspace{2pt}(A)= \textrm{Sing}_{0} \hspace{3pt} ( A - \lambda_{i} I)$, without loss of generality, we can assume that $ \lambda_{i}=0$. Let 
 \begin{equation*}
    A = 
\begin{bmatrix}
A_{0} & 0 \\
0     & A_{1} 
\end{bmatrix}, 
\end{equation*}
where $A_{0}$  is the union of the Jordan blocks with eigenvalues 0, and $ A_{1}$ is the union of the Jordan blocks with eigenvalues nonzero. Let $ B_{1}, B_{2}, \ldots ,B_{r} $ be the Jordan blocks of $A$ with eigenvalue $0$. Assume $ B_{1}, B_{2}, \ldots ,B_{s} $ have size greater than $1$, and $ B_{s+1},\ldots ,B_{r}$ have size exactly $1$. Let  $ B_{i}$ have size $ l_{i}$, so $ B_{i} \in M_{l_{i}\times l_{i}}.$ Also, given $ \underline{x}, \underline{y} \in k^{n+1}$, write   $$ \underline{x} = ( \underline{z},\underline{x_{1}}),    \hspace{2pt} \textrm{where} \hspace{2pt} \underline{z}= (\underline{z_{1}}, \ldots \underline{z_{r}} ), \hspace{3pt} \underline{z_{i}} \in k^{l_{i}} \text{ for all } i $$
$$\underline{y}= ( \underline{w}, \underline{y_{1}}), \quad \textrm{where}\,\,\, \underline{w}= ( \underline{w_{1}}, \ldots, \underline{w_{r}}), \hspace{5pt}\underline{w_{i}} \in k^{l_{i}} \text{ for all } i .$$

We  have 

\hspace{57pt}$  \left([\underline{x}], [\underline{y}]\right) \in \textrm{Sing}_{0}(A) \Longleftrightarrow \underline{x}^{t}\underline{y}=0,\,\,  \underline{x}^{t}A=0,\,\, A\underline{y}=0$

 $$ \Longleftrightarrow \underline{x}^{t}\underline{y}=0, \,\,\,\underline{x_1}^{t} A_1=0,\,\,\, A_1 \underline{y_1}=0,\,\,\, B_i \underline{w_{i}}=0,\,\,\, \underline{z_{i}}^t B_i=0\quad \text{for}\,\,\, 1\leq i\leq s, \medspace \underline{x_{1}}=0, \,\,\underline{y_{1}}=0  $$
$$\hspace{1.8pt} \Longleftrightarrow  \underline{x_{1}}= \underline{y_{1}}=0,\,\,\, z_{i,1}=\ldots=z_{i,l_{i}-1}=0,\,\,\, w_{i,2}=\ldots= w_{i,l_{i}}=0 \quad\text{ for } 1\leq i\leq s, \sum_{s<i\leq r}z_{i}w_{i}=0. $$

So, $$ \textrm{Sing}_{0}\hspace{2pt}(A) \cong \Big \{([z_{1l_{1}}:z_{2l_{2}}:\ldots :z_{sl_{s}}:z_{s+1}:\ldots : z_{r}],[w_{11}:w_{21}:\ldots :w_{s1}:w_{s+1}:\ldots: w_{r}])$$
\begin{equation*}
\hspace{-90pt}\in \mathbb{P}^{r-1} \times \mathbb{P}^{r-1} \hspace{2.5pt} | \hspace{2.5pt}   \sum_{s<i\leq r} z_{i}w_{i}=0 \Big\}
\end{equation*}
\hspace{50pt} $ \cong V_{s,r}.$
\end{proof}

If $r_i\geq n$ for some $i$, then for some $\lambda\in k$, a Jordan form of $A$ has at least $n$ Jordan blocks corresponding to eigenvalue $\lambda$. So, $A-\lambda I$ is either zero or has \eqref{jordan1} or \eqref{jordan2} as a Jordan form. But this is impossible by part $1$ as $H_{[\Bar{A}]} $ is irreducible. So, for each $i$, we have $r_i\leq n-1$, hence dim $V_{s_i,r_i}\leq 2r_i-2\leq 2n-4$ . So, we have $ \textrm{dim }H_{\textrm{sing}} \leq 2n-4. $ So $H$ is regular in codimension $1$. $H$ is a hypersurface in the smooth variety $X$, so it is $S_{2}$. By Serre's criterion of normality, $H$ is normal. By adjunction, $ K_{H} = L_{-(n-1), -(n-1)}|_{ H}$ is anti-ample, so $H$ is Fano. 

For $ i,j \in \{ 0,1,\ldots ,n\}$, let  $ U_{ij} \cong \mathbb{A}^{2n}$ be the open affine in $ \mathbb{P}^n \times \mathbb{P}^n$ defined by $ x_{i} \neq 0, y_{j} \neq 0$. Note that  $$ X \subseteq \bigcup_{i\neq j} U_{ij}$$  
as the points outside the union are the points  $p_{i}$,  where $p_{i}$ has $ x_{j}=y_{j}=0 \hspace{3pt} \text{for all}  \hspace{3pt} j \neq i$, none of the $ p_{i}$'s lie in $X$. So, $ U_{ij}$'s gives an open cover of $ H_{[\Bar{A}]}$ in $ \mathbb{P}^n \times \mathbb{P}^n.$ It suffice to show that $ H_{[\Bar{A}]} \cap U_{ij}$ is a hyperplane in $ \mathbb{A}^{2n-1}$ of degree less than equal to 3. Without loss of generality, we assume $ i=0, j=1. $ On $ U_{01} \cong \mathbb{A}^{2n},$ the equation $ \underline{x}^{t} \underline{y}=0 $ gives $ y_{0} + x_{1}+ x_{2}y_{2}+ \ldots + x_{n}y_{n}=0$ (put $x_{0}=1, y_{1}=1)$. So, 
$$ y_{0}= q(x_{1},x_{2},\ldots, x_{n},y_{2}, \ldots, y_{n}) ,$$ 
where $q$ is a polynomial of degree 2. Substituting this in the equation $ \underline{x}^{t} A\underline{y}=0$ and putting $ x_{0}=y_{1}=1$, we get  an equation $ P(x_{1},x_{2},\ldots ,x_{n},y_{2}, \ldots y_{n})=0 $, with $ \textrm{deg}\hspace{2pt}P \leq 3 .$
So 
$$ H \cap U_{ij} \cong  \Big\{ ( x_{1},\ldots, x_{n}, y_{2},\ldots , y_{n}) \in \mathbb{A}^{2n-1} \hspace{3pt} | \hspace{3pt} P(x_{1},\ldots x_{n}, y_{2},\ldots,y_{n})=0 \Big\}, $$ 
hence $ H\cap U_{ij}$ is isomorphic to a hypersurface in $ \mathbb{A}^{2n-1}$ of degree less than or equal to 3. We have already shown that $H$ is normal rational, so the hypersurface is too. The projective closure of this hypersurface is rational, so it cannot be iterated cone over an elliptic curve. Hence, by Theorem \ref{A}, the hypersurface has canonical singularities, so $H$ has canonical singularities.

\textbf{Proof of part 3:} For $ n\geq 3$ and a general $ [\Bar{A}]$ in $ \mathbb{P}(W^{\ast})$, $H_{[\Bar{A}]}$ is smooth by Bertini, and 
$$ \textrm{Pic}(X) \longrightarrow \textrm{Pic}(H_{[\Bar{A}]})$$ 
 is an isomorphism by the Grothendieck-Lefschetz theorem. So, $ H= H_{[\Bar{A}]}$ is a rational smooth Fano variety of Picard rank 2, and  by $(3)$, the maps $$ p,q : H \longrightarrow \mathbb{P}^n$$  
 has all fibers projective spaces, where $$ p\Big([\underline{x}],[\underline{y}]\Big ) = [\underline{x}] \quad\textrm{and}\quad  q\Big([\underline{x}],[\underline{y}]\Big ) = [\underline{y}].$$ 
 So, $ p.q$ are the two elementary contractions of $H.$ Clearly, dimension of a general fiber of $p$ or $q$ is $ \textrm{dim} \hspace{3pt} H-n = n-2$, hence the general fiber is $ \mathbb{P}^{n-2}$. This proves part $3$.

\end{proof}

\begin{remark}
Our method gives an alternate proof of the main theorem in \textrm{\cite{MN}}. All the parts in that theorem, except the description of the singularity as $A_1$ or $A_2$ type, are part of our theorem. For $n=2$, one can reduce $A$ in part 2 of our theorem to Jordan form and explicitly compute the equations of the hypersurfaces as stated in part 2 to show that the singularity is of $A_1$ or $A_2$ type. 
\end{remark}

\begin{remark}\label{deform}
    Since $\mathbb{P}(T_{\mathbb{P}^n})$ is the rational homogeneous space $\textrm{SL}_{n+1}(\mathbb{C})/B$ where $B$ is a parabolic subgroup, as in \textrm{\cite[Remark 2.8] {MN}}, Theorem B gives 
    \begin{enumerate}
\item All possible degeneration of a certain Fano manifold of Picard rank $2$ in the rational homogeneous space $\textrm{SL}_{n+1}(\mathbb{C})/B$,
\item All possible deformations of the union of two Schubert divisors in $\textrm{SL}_{n+1}(\mathbb{C})/B$.
\end{enumerate}
\end{remark}
\begin{corollary}\label{dual}
Regard $\mathbb{P}(T_{\mathbb{P}^n})$ as a subvariety of $\mathbb{P}(W)$ via the embedding given by $| \mathcal{O}_{\mathbb{P}(T_{\mathbb{P}^n})}(1) |$. Then the dual variety of $\mathbb{P}(T_{\mathbb{P}^n})\subseteq \mathbb{P}(W) $ is 
$$\Big\{[\Bar{A}]\in \mathbb{P}(W^{\ast})\,\,|\,\, A \text{ does not have distinct eigenvalues}\Big\}.$$
\end{corollary}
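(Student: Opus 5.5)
The dual variety of $X=\mathbb{P}(T_{\mathbb{P}^n})\subseteq\mathbb{P}(W)$ is, by definition, the closure of the set of hyperplanes $[\Bar{A}]\in\mathbb{P}(W^{\ast})$ that are tangent to $X$ at some point. Since $X$ is smooth, this is exactly the locus where the scheme $H_{[\Bar{A}]}=X\cap\{\underline{x}^tA\underline{y}=0\}$ is singular. So the plan is to show that $H_{[\Bar{A}]}$ is singular if and only if $A$ does not have $n+1$ distinct eigenvalues. This condition is a closed condition: it is the vanishing of the discriminant of the characteristic polynomial. It is also well defined on $W$, since adding a scalar matrix shifts all eigenvalues equally. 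Hence the locus of singular hyperplane sections is automatically closed and equals the dual variety, with no closure needed. Throughout, I use the Jordan form reduction $H_{[\Bar{A}]}\cong H_{[\Bar{J}]}$ from the proof of Theorem \ref{B}.

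\emph{Reducible case.} Suppose $H_{[\Bar{A}]}$ is reducible. By part 1 of Theorem \ref{B}, $[\Bar{A}]=[\Bar{B}]$ with $B$ of rank $1$, so $B$ has eigenvalue $0$ with multiplicity at least $n\geq 2$, and $A$ does not have distinct eigenvalues. Moreover $H_{[\Bar{A}]}=D_1\cup D_2$ with $D_1\cap D_2\neq\varnothing$ (it is $V_{0,n}$ or $V_{1,n}$). A point on the intersection of two components is a singular point, so these $H_{[\Bar{A}]}$ are singular, as required.

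\emph{Irreducible case.} Suppose $H_{[\Bar{A}]}$ is irreducible. Part 2 of Theorem \ref{B} gives $(H_{[\Bar{A}]})_{\textrm{sing}}\cong\sqcup_i V_{s_i,r_i}$, with one term for each eigenvalue $\lambda_i$. Each $V_{s,r}$ with $r\geq 1$ is nonempty: for $r=1$ it is a point, and in general it contains the point $([1:0:\dots:0],[0:1:0:\dots:0])$ when $r\geq 2$. Since every eigenvalue has $r_i\geq 1$, the singular locus is empty if and only if the set of distinct eigenvalues indexes no nonempty $V_{s_i,r_i}$. But this never happens, so we need to look more closely at the individual terms. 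For $r_i=1$, $V_{s_i,1}=\{([z],[w])\in\mathbb{P}^0\times\mathbb{P}^0 : \cdots\}$, and I need to check whether the defining equation kills the single point. If $s_i=0$, the equation is $z_1w_1=0$, which has no solutions in $\mathbb{P}^0\times\mathbb{P}^0$, so $V_{0,1}=\varnothing$. If $s_i=1$, the equation is empty and $V_{1,1}$ is a point. For $r_i\geq 2$, $V_{s,r}$ is always nonempty (for $s<r$ it is a Segre hyperplane section of dimension $\geq 1$). So the singular locus is empty exactly when every eigenvalue has a single Jordan block of size $1$, that is, when all $n+1$ eigenvalues of $A$ are distinct. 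In particular, for distinct eigenvalues $H_{[\Bar{A}]}$ is irreducible, since the reducible case forces repeated eigenvalues.

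\emph{Conclusion and main obstacle.} Combining the two cases, the singular hyperplane sections are exactly those with a repeated eigenvalue, which is a closed set, and this identifies the dual variety. The step I expect to need the most care is the bookkeeping of the degenerate cases $V_{0,1}=\varnothing$ and $V_{1,1}=\mathrm{pt}$, together with confirming that the dual variety is exactly this locus and not a proper closed subset of it. The latter is immediate because the singular-section locus is the set-theoretic image of the conormal variety, which is closed. A check on the answer: the locus is the discriminant hypersurface, consistent with the expectation that the dual variety is a hypersurface, since $X$ is homogeneous and not a degenerate case.
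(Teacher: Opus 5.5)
Your proof is correct and follows the paper's route. The paper also first notes that a smooth section must be irreducible (your observation that $D_1\cap D_2\neq\varnothing$ supplies exactly this). It then uses part 2 of Theorem B: the singular locus is empty if and only if every $V_{s_i,r_i}$ is empty, i.e. $s_i=0$ and $r_i=1$ for each eigenvalue, i.e. $A$ has $n+1$ distinct eigenvalues.

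One tidy-up: delete your initial claim that $V_{s,1}$ is always a point. It is false, as you yourself note a few lines later when you show $V_{0,1}=\varnothing$.
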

\begin{proof}
    If $H_{[\Bar{A}]}$ is smooth, it must be irreducible. From Theorem \ref{B}, it is clear that $(H_{[\Bar{A}]})_{\textrm{sing}}=\varnothing$ if and only if $V_{s_i, r_i}=\varnothing$ for each eigenvalue $\lambda_i$ of $A$. This is equivalent to saying that $s_i=0,$ $r_i=1$ for each eigenvalue $\lambda_i$ of $A$. This is the same as saying $A$ has distinct eigenvalues.
\end{proof}
\vspace{4pt}

\begin{proof}[Proof of Theorem \ref{C}] The parts
    $(i)$  and $(ii)$ are immediate  from  \textrm{\cite[Theorem 3.1, eq. 3.3]{Jia}}. We identify $ A^{2n-2}(H) $ with $ \mathbb{Z}$, $ \zeta^{n-2}\alpha^{n}$ being the canonical generator.
 We prove $(iii)$. $ (a)$ is clear, $(d)$ follows as $ E_{i}\cap \pi^{-1}L = \varnothing $   for a general hyperplane in $L$ in $ \mathbb{P}^n$.

$ \underline{Proof\medspace of \medspace(c)}$: For $ i \neq j$ we have  $E_{i} \cap E_{j} = \varnothing $, hence $[E_{i}] \cdot [E_{j}]=0$.  For smooth varieties $ Y_{1} \hookrightarrow Y_{2}$, let $ N_{Y_{1}/Y_{2}}$ denote normal bundle of $ Y_{1}$ in $ Y_{2}$. Letting $ X= \mathbb{P}(T_{\mathbb{P}^n})$, we have following short exact sequence 
$$ 0 \longrightarrow N_{E_{i}/H}\longrightarrow N_{E_{i}/X}\longrightarrow N_{H/X|_{E_{i}}} \longrightarrow 0. $$ Note that $N_{H/X|_{E_{i}}}= \mathcal{O}_{X}(H)_{|_{E_{i}} }= \mathcal{O}_{\mathbb{P}(T_{\mathbb{P}^n})}(1)_{|_{E_{i}}}= \mathcal{O}_{E_{i}}(1), N_{E_{i}/X}= \mathcal{O}_{E_{i}}^{n}.$ So, $ N_{E_{i}/H}= \Omega_{E_{i}}(1)$ by Euler exact sequence in $ E_{i}\cong \mathbb{P}^{n-1}.$ We have 
$$ [E_{i}]^{2}= c_{n-1}(N_{E_{i}/H})= c_{n-1}(\Omega_{\mathbb{P}^{n-1}}(1))= (-1)^{n-1}.$$ 

    $\underline{Proof \medspace of\medspace (b)}:$  By $(ii)$, there are unique integers $  a_{0},\ldots, a_{n-2}, \beta_{0},\ldots, \beta_{n}$  such that 
    \begin{equation} \label{chow}
 \zeta^{n-1}= \sum_{j=1}^{n-1} a_{n-1-j} \zeta^{n-1-j}\alpha^j + \sum_{j=0}^{n} \beta_{j}[E_{j}].
 \end{equation}
Note that $ \zeta^{n-1} \cdot [E_{j}]= ( \zeta_{| E_{j}})^{n-1}= c_{1}(\mathcal{O}_{E_{j}}(1))^{n-1}=1.$ So,  (\ref{chow}) and $(d)$, $(c)$ gives 
$$ 1= \zeta^{n-1} \cdot[E_{j}]= \beta_{j}[E_{j}]^{2}=(-1)^{n-1}\beta_{j}.$$ So, $ \beta_{j}= (-1)^{n-1}$ for each $j$.
Let $ \Tilde{\zeta}=c_{1}(\mathcal{O}_{\mathbb{P}(T_{\mathbb{P}^n})}(1)), \Tilde{\alpha}= q^{\ast}c_{1}(\mathcal{O}_{\mathbb{P}^n}(1))\in A^{1}(X).$ Note that for $ 0\leq i \leq 2n-2,$ we have  
$$ \pi_{\ast}(\zeta^{i})= q_{\ast} i_{\ast}(\zeta^{i})= q_{\ast}(\Tilde{\zeta}^{i+1}) \,\,\,(\text{by \cite[Proposition 2.6 (b)]{Fu}})$$ $$\,= (-1)^{i-n} s_{i+2-n}(T_{\mathbb{P}^n}),$$ where $s_{j}(T_{\mathbb{P}^n})$ is the $j^{th}$ Segre class of $ T_{\mathbb{P}^n}$ (see {\cite[Section 3.1]{Fu}}, the convention of projective bundle there is slightly different from the convention in {\cite[Chapter 2, section 7]{Hart}} which we are following, so the sign $ (-1)^{i-n}$ is coming). Let $ \alpha_{1}= c_{1}(\mathcal{O}_{\mathbb{P}^n}(1)) \in A^{1}(\mathbb{P}^n).$

Using \textrm{\cite[Theorem 3.1]{Jia}} and \textrm{\cite[Equation 3.4]{Jia}}, we have for $ 0 \leq i \leq n-2 $,
$$ a_{i}\alpha_{1}^{n-1-i}= \pi_{i \ast}(\zeta^{n-1})= \sum_{j=0}^{n-2-i}(-1)^{j}c_{j}(T_{\mathbb{P}^n}) \cap \pi_{\ast}(\zeta^{n-2-i-j}\cdot \zeta^{n-1}) $$
$$ \hspace{75pt}= \sum_{j=0}^{n-2-i}(-1)^j c_{j}(T_{\mathbb{P}^n})\cdot \pi_{\ast}(\zeta^{2n-3-i-j})$$
$$\hspace{140pt} =\sum_{j=0}^{n-2-i}(-1)^{j}c_{j}(T_{\mathbb{P}^n}) \cdot s_{n-1-i-j}(T_{\mathbb{P}^n})(-1)^{n-1-i-j} $$
$$ \hspace{100pt}= (-1)^{n-1-i} \sum_{j=0}^{n-2-i}c_{j}(T_{\mathbb{P}^n})\cdot s_{n-1-i-j}(T_{\mathbb{P}^n}) $$
$$ \hspace{10pt}= (-1)^{n-i}c_{n-1-i}(T_{\mathbb{P}^n}), $$ as  $\sum_{j=0}^{n-1-i}c_{j}(T_{\mathbb{P}^n}) \cdot s_{n-1-i-j}(T_{\mathbb{P}^n}) =0.$

By the Euler exact sequence 
$$ 0 \longrightarrow \mathcal{O}_{\mathbb{P}^n} \longrightarrow \mathcal{O}_{\mathbb{P}^n}(1)^{n+1} \longrightarrow T_{\mathbb{P}^n} \longrightarrow 0 ,$$
we get $$ c_{n-1-i}(T_{\mathbb{P}^n})= \binom{n+1}{i+2} \alpha_{1}^{n-1-i}.$$
So, $ a_{i}= (-1)^{n-i} \binom{n+1}{i+2}$ for $ 0\leq i \leq n-2.$ This proves $(b).$

$ \underline{Proof \medspace of \medspace (e)}$: First we show that $\zeta \cdot [E_{k}]$ is independent of $k$. By (i), it suffices to show that for each $0\leq i\leq n-2$,
\begin{equation*}
\zeta^{i}\cdot \alpha^{n-2-i}\cdot[E_{k}] 
\end{equation*}
is independent of $k$. This is indeed true, since  $\zeta^{i}\cdot\alpha^{n-2-i}\cdot\zeta\cdot[E_{k}] = \zeta_{|E_{k}}^{i+1}(\alpha_{|E_{k}})^{n-2-i}$
\[=
\left\{
\begin{array}{ll}
1 & \text{if}\,\, i = n-2,\\ 
0 & \text{otherwise, } 
\end{array}
\right.
\] as $\zeta_{|E_{k}} = c_{1}(\mathcal{O}_{E_{k}}(1)), \alpha_{|_{E_{k}}}  = 0$.
So, there is $\tau \in A^{n}(H)$ such that $\zeta\cdot[E_{k}] = \tau$ for each $0\leq k\leq n$. 

Since $c_{j}(T_{\mathbb{P}^{n}}) = \binom{n+1}{j}\alpha_{1}^{j}$ for $0\leq j \leq n$, we have 
\begin{equation*}
\tilde{\zeta}^{n} = \sum^{n}_{j = 1} (-1)^{j-1} \binom{n+1}{j} \tilde{\zeta}^{n-j} \tilde{\alpha}^{j},    
\end{equation*}
by {\cite[Appendix A]{La}}. Restricting this to $H$, we get 
\begin{equation}\label{Top intersection}
{\zeta}^{n} = \sum^{n}_{j = 1} (-1)^{j-1} \binom{n+1}{j} {\zeta}^{n-j} {\alpha}^{j}
\end{equation}
Now (\ref{chow}) and (\ref{Top intersection}), gives
\begin{equation*}
\sum^{n}_{j = 1} (-1)^{j-1} \binom{n+1}{j} \zeta^{n-j} \alpha^{j} = \sum_{j = 1}^{n-1} (-1)^{j-1} \binom{n+1}{j} \zeta^{n-j}\alpha^{j} + (-1)^{n-1} \sum_{j = 0}^{n} [E_{j}] \cdot \zeta,   
\end{equation*}
i.e., $(-1)^{n-1}(n+1)\alpha^{n} = (-1)^{n-1}(n+1)\tau$, that is, $\tau = \alpha^{n}$, proving (e).
\end{proof}

\printbibliography
\end{document}